\documentclass{article}
\title{Computing weight one modular forms over $\C$ and $\Fpbar$.}
\author{Kevin Buzzard} % if not a referee's report
\usepackage{amsmath} % do we need this? Contains \projlim and
                     % DeclareMathOperator...
\usepackage{amssymb} % do we need this? Contains \nmid, for example.
\newcommand{\Qbar}{\overline{\Q}}
\newcommand{\C}{\mathbf{C}}
\newcommand{\Q}{\mathbf{Q}}

\newcommand{\Qlbar}{\Qbar_\ell}
\newcommand{\Z}{\mathbf{Z}}

\newcommand{\F}{\mathbf{F}}

\newcommand{\Fbar}{\overline{\F}}
\newcommand{\Fpbar}{\Fbar_p}
\newcommand{\calO}{\mathcal{O}}
\newcommand{\GQ}{\Gal(\Qbar/\Q)}
\newcommand{\GM}{\Gal(\overline{M}/M)}
\newcommand{\new}{\mbox{new}}
\newcommand{\rhobar}{\overline{\rho}}
\newcommand{\Xbar}{\overline{X}}

\DeclareMathOperator{\Gal}{Gal}
\DeclareMathOperator{\GL}{GL}
\DeclareMathOperator{\PGL}{PGL}
\DeclareMathOperator{\PSL}{PSL}
\DeclareMathOperator{\SL}{SL}
\DeclareMathOperator{\Ind}{Ind}
\DeclareMathOperator{\disc}{disc}
\DeclareMathOperator{\Frob}{Frob}
\DeclareMathOperator{\Spec}{Spec}
%
% Footnote macro. This should be eventually removed, because all the
% footnotes are things that I should eventually check.
%

%
\usepackage{amsthm}
\theoremstyle{plain}
\newtheorem{theorem}{Theorem}
\newtheorem{lemma}[theorem]{Lemma}
\newtheorem{corollary}[theorem]{Corollary}
\newtheorem{proposition}[theorem]{Proposition}
\theoremstyle{remark}
\newtheorem{remark}[theorem]{Remark}

% I guess those will have numbers. Use a * to kill the numbers.

\begin{document}
\maketitle % puts the title in, and the author if there is one.
\begin{abstract}
We report on a systematic computation of weight one cuspidal eigenforms
for the group $\Gamma_1(N)$ in characteristic zero and in characteristic $p>2$.
Perhaps the most surprising result was the existence of a mod~$199$ weight~1 cusp form
of level~82 which does not lift to characteristic zero.
\end{abstract}

\section*{Introduction}
It is nowadays relatively easy to compute spaces of classical cusp forms
of weights two or more, thanks to programs by William Stein written
for the computer algebra packages Magma~\cite{magma} and SAGE~\cite{sage}.
On the other hand, there seems to be relatively little published regarding
explicit computations
of weight one cusp forms. In characteristic zero, computations have
been done by Buhler
(\cite{buhler}) and Frey and his coworkers (\cite{freyetal}), and
there is a beautiful paper of Serre (\cite{serre}) which
explains several tricks for computing with weight one forms
of prime level, but as far as
we know there is (until now) nothing systematic in the literature. 
In characteristic~$p$ (where there are sometimes more forms -- that is,
forms that do not lift to characteristic zero) there is even less
in the literature; see~\cite{wt1edix} and the references therein,
and the beginning of section~3 of this paper, for more information.
The algorithms of~\cite{wt1edix} for computing mod~$p$ forms
of weight~1 have been implemented by Wiese in the computer algebra
package Magma. Note however that they involve
giving both~$N$ and~$p$ as inputs, and the running time depends on~$p$.

In this paper
we report on a fairly systematic computation of weight~1 forms that we did using
Magma about ten years ago now (although we re-ran the code more recently
and, unsurprisingly, the same programs could now go a little further).
The characteristic zero code we wrote has since
been incorporated as part of Magma, but the methods work just as well in
characteristic~$p$; indeed for a given level~$N$ we can compute mod~$p$ for
all odd primes $p\nmid N$ at once. In particular our algorithm takes as
input only~$N$, and spits out a basis for the characteristic zero
weight~1 forms, plus the primes~$p$ for which there are mod~$p$
forms that do not lift, plus the $q$-expansions of these non-liftable
forms. We remark that we do not even attempt to define mod~$p$
modular forms of level~$N$ if $p\mid N$. We apologise that it has taken
so long to get the results down on paper.
Our methods and calculations have in the mean
time been greatly extended by George Schaeffer, and his forthcoming
thesis~\cite{schaeffer:thesis}
contains many many more examples of mod~$p$ forms that do not lift to
characteristic zero.

The methods basically go back to Buhler's thesis~\cite{buhler}, and the main
idea is very simple: we cannot compute in weight~1 directly using modular
symbols, but if we choose a non-zero modular form~$f$ of weight $k\geq1$
then multiplication by~$f$ takes us from cusp forms
of weight~1 to cusp forms of weight $k+1\geq2$ where
we can compute using modular symbols, and then we divide by~$f$ again to produce
the space of weight~1 forms with possible poles where $f$ vanishes.
Repeating this idea for lots of choices of~$f$ and intersecting
the resulting spaces will often enable us to
compute the space of holomorphic weight~1 forms rigorously. We explain
the details in the next section.

Recent developments by Khare and Wintenberger on
Serre's conjecture give another approach for computing
weight~1 forms in characteristic zero: instead of working on
the automorphic side one can compute on the Galois side.
The work of Khare and Wintenberger
implies that there is a canonical bijection between the set of weight~1
normalised new cuspidal eigenforms over $\C$
and the set of continuous odd irreducible representations $\GQ\to\GL_2(\C)$
(see Th\'eor\`eme~4.1 of~\cite{deligne-serre} for one direction
and Corollary~10.2 of~\cite{KW1} for the other).
Say $\rho_f$ is the Galois representation attached to the form~$f$; then
the level of~$f$ is the conductor of~$\rho_f$. Let us consider for a moment
an arbitrary irreducible representation $\rho:\GQ\to\GL_2(\C)$. The projective
image of $\rho$ in $\PGL_2(\C)$ is a finite subgroup of $\PGL_2(\C)$ and is
hence either cyclic, dihedral, or isomorphic to $A_4$, $S_4$ or $A_5$, and in
fact the cyclic case cannot occur because $\rho$ is irreducible. If~$f$ is a
characteristic zero eigenform then we say that the \emph{type} of~$f$ is
dihedral, tetrahedral ($A_4$), octahedral ($S_4$) or icosahedral ($A_5$)
according to the projective image of $\rho_f$.
Because of the Khare--Wintenberger
work, one approach for computing weight~1 modular forms of level~$N$
in characteristic zero would be to list all the finite extensions of $\Q$
which could possibly show up as the kernel of the projective representation
attached to a level~$N$ form, and then reverse-engineer the situation
(carefully analysing liftings and ramification, which would not be much
fun) to produce the forms themselves. Listing the extensions is just
about possible: one can use class
field theory to deal with the dihedral cases, and the $A_4$, $S_4$ and $A_5$
calculations can be done because the projective representation attached to a
level~$N$ form is unramified outside~$N$, so to compute in level~$N$
one just has to list all $A_4$, $S_4$ and $A_5$ extensions unramified
outside~$N$ (which is feasible nowadays for small~$N$ and indeed
there are now tables of such things, see for example~\cite{jrtables}
for an online resource). Note that $A_4$ and $S_4$ are solvable, so one
could perhaps in theory also use class field theory to analyse these
cases. However lifting the projective representation
to a representation can be troublesome to do in practice. Furthermore,
this approach does not work in characteristic~$p$: the problem is
if $k$ is a finite subfield of $\Fpbar$ then $\PGL_2(k)$ is a finite
subgroup of $\PGL_2(\Fpbar)$, and this gives us infinitely many more
extensions which must be checked for; hence the method breaks down.
In fact looking for mod~$p$ Galois representations with large image
is quite hard, it seems, and perhaps it is best to do the calculations
on the automorphic side and use known theorems to deduce results
on the Galois side. For example, as a consequence of our search at
level~82 we proved the following result:
\begin{theorem}

(a) There is a mod~199 weight~1 cusp form of level~82 which is not
the reduction of a weight~1 characteristic zero form.

(b) There is a number field~$M$, Galois over~$\Q$, unramified
outside~2 and~41, with Galois group~$\PGL_2(\Z/199\Z)$.
\end{theorem}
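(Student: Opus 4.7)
\medskip

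\noindent\emph{Proof sketch.} For part~(a), the plan is to apply the multiply-by-$f$/divide-by-$f$ method described above at level $N=82$, working simultaneously over~$\C$ and over~$\F_{199}$. Choose a small family of nonzero forms $f_1,\ldots,f_r$ of low weights $k_i\geq1$ at level~$82$ with explicit $q$-expansions (for example low-weight Eisenstein series and cusp forms produced by modular symbols); for each $f_i$, compute $(1/f_i)\cdot S_{k_i+1}(\Gamma_1(82))$ and intersect the resulting subspaces of $q$-expansions to get an upper bound for $S_1(\Gamma_1(82))$ in each characteristic. Diagonalising the Hecke action and comparing, we expect the mod~$199$ space of cuspidal eigensystems to contain at least one system which is not the reduction of any characteristic zero eigensystem.

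For part~(b), attach to this non-liftable eigenform a continuous semisimple representation $\rhobar : \GQ \to \GL_2(\Fpbar)$ ($p=199$) via the mod~$p$ analogue of Deligne--Serre; Edixhoven's theorem ensures that $\rhobar$ is unramified at~$p$, despite $\rhobar$ coming from a genuinely characteristic-$p$ form, so combining with unramifiedness outside $82p$ gives $\rhobar$ unramified outside $\{2,41\}$. Irreducibility is automatic: a reducible $\rhobar$ is a sum of Dirichlet characters and corresponds to an Eisenstein eigensystem, which lifts trivially to characteristic zero and so contradicts~(a). Let $M$ be the fixed field of the kernel of the projectivisation $\GQ \to \PGL_2(\Fpbar)$ of~$\rhobar$; then $M/\Q$ is Galois and unramified outside $\{2,41\}$.

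To identify $\Gal(M/\Q)$, apply Dickson's classification to the projective image $G\subseteq\PGL_2(\Fpbar)$: either $G$ is cyclic, dihedral, isomorphic to $A_4$, $S_4$ or $A_5$, or $G\supseteq\PSL_2(\F_{199^r})$ for some $r\geq1$. The cyclic case is excluded by irreducibility. In each of the remaining exceptional cases $|G|$ is prime to~$199$, and a standard lifting argument (class field theory in the dihedral case; the Khare--Wintenberger work quoted in the introduction for $A_4$, $S_4$, $A_5$, exploiting the fact that when $p\nmid|G|$ the mod-$p$ projective representation lifts projectively to characteristic zero) produces a characteristic zero weight~$1$ form of level~$82$ reducing to our eigensystem, contradicting~(a). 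Hence $G\supseteq\PSL_2(\F_{199^r})$; reading off the Hecke eigenvalues forces $r=1$, and checking that the nebentypus character $\det\rhobar$ takes non-square values somewhere pins down $G = \PGL_2(\Z/199\Z)$ rather than $\PSL_2(\Z/199\Z)$.

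The main obstacle is the rigorous verification of~(a): one must be sure that the intersection of the auxiliary divided spaces really computes $S_1(\Gamma_1(82))$ exactly (not just an upper bound) in both characteristics, and must certify that the surplus mod-$199$ eigensystem is genuinely non-liftable rather than the reduction of a characteristic zero form defined over a ramified extension of~$\Z_{199}$. Once~(a) is established and the Hecke eigenvalues of the non-liftable form are known explicitly, part~(b) follows essentially mechanically from the classification argument above.
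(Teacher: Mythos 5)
Your proof runs in the opposite logical direction to the paper's, and the inversion creates a genuine gap. You propose to establish (a) first by a computation at level~82 and then deduce the large image in (b) from non-liftability, eliminating the prime-to-$199$ cases of Dickson's classification because in those cases $\rhobar$ would lift to a characteristic zero weight~1 form ``reducing to our eigensystem, contradicting~(a).'' But the lift produced in those cases need not have level~82: the conductor can genuinely rise under lifting, and the paper's own level~74 example (a mod~3 form of level~74 whose only characteristic zero lift lives at level~148) shows this happens in practice. A computation at level~82 therefore only certifies that no characteristic zero form \emph{of level~82 and character congruent to $\chi$} reduces to the eigensystem --- the weak form of (a) --- and that is not enough to run your Dickson elimination, nor is it the statement actually claimed. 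The paper resolves this by proving the large-image proposition \emph{first}, by direct elimination of every Dickson case (dihedral via inert primes with $a_\ell\neq 0$; $A_4,S_4,A_5$ via the $199$ distinct values of $a_\ell^2/\chi(\ell)$; reducible via $a_7\neq a_{89}$), and then deduces the strong form of (a) from the fact that no finite subgroup of $\GL_2(\C)$ has a subquotient isomorphic to the simple group $\PSL_2(\F_{199})$.

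Two further steps you wave away are in fact the hardest parts of the argument. First, ``reading off the Hecke eigenvalues forces $r=1$'' is not true as stated: the eigenvalues generate $\F_{199^2}$ (already $a_2=18\tau+85$ with $\tau\notin\F_{199}$), so nothing about the raw eigenvalues excludes $\PSL_2(\F_{199^2})$. The paper's exclusion of this case is the month-long computation that $a_{1+41t}\in\F_{199}$ for $0\leq t\leq 8610$, combined with the degree bound $353010$ for $\omega$ on $X_1(3362)$ and a restriction-to-$\Gal(L/\Q(\zeta_{41}))$ argument to bound $|X|$ by $40(199^3+199)$. Second, your criterion for $\PGL_2$ versus $\PSL_2$ --- that $\det\rhobar$ takes non-square values --- fails here: $\det\rho_f=\chi$ has image $\mu_{40}$, which consists of squares in $\F_{199^2}^\times$ (the paper uses exactly this to rule out $\PGL_2(\F_{199^2})$), and after the scalar twist placing $X$ inside $Z\cdot\GL_2(\F_{199})$ the determinant no longer detects the index. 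The paper instead needs the delicate test on $\Frob_3$: if $\alpha,\beta$ are its eigenvalues and $\delta^2=\alpha\beta$ then $(\alpha+\beta)/\delta\notin\F_{199}$.
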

This result, which relies on a computer calculation, is proved
in the final section of this paper.

When we initially embarked upon this computation, the kinds of things we
wanted to know were the following:

\begin{itemize}

\item What are the ten (or so) smallest
integers~$N$ for which the space of weight~1 cusp forms of level~$N$
is non-zero?

\item What is the smallest~$N$ for which there exists a weight~1
level~$N$ eigenform whose associated Galois representation has
projective image isomorphic to $A_4$? To $S_4$?

\item Give some examples of pairs $(N,p)$ consisting of an integer~$N$
and a prime~$p$ for which there is a mod~$p$ weight~1 eigenform
of level~$N$ which does not lift to a characteristic zero weight~1 eigenform
of level~$N$ (Mestre had already given an example with $(N,p)=(1429,2)$;
see the appendices to~\cite{wt1edix}; can one find examples with $p>2$?).

\end{itemize}

We found it hard to extract the answers to these questions from the
literature, so we answered them ourselves with a systematic computation
of weight one forms in characteristic zero and $p>2$. In~2002 we looked
in the range $1\leq N\leq 200$ in characteristic zero, and $1\leq N\leq 82$
in characteristic~$p$
(adopting a ``quit while you're ahead'' policy in the characteristic~$p$
case).
Both of these bounds are very modest and even ten years ago, when we
actually did the calculations, it would have been possible to go further.
When Gabor Wiese re-ignited our interest in this project we ran the
characteristic zero programs once more on a more modern computer, and this time
they ran much faster, and up to $N=352$, before running out of memory.
George Schaeffer has since stepped up to the plate and his forthcoming
PhD thesis pushes these calculations much further.

Our systematic computations did not find any characteristic zero weight~1
modular forms with associated Galois representations
having projective image isomorphic to $A_5$ so we do not know what
the smallest conductor of an $A_5$-representation is. Some trickery
computing mod~5 representations attached to weight~5 forms did enable
us to ``beat Buhler's record'' however -- so we do now know that
there is an icosahedral weight~1 form of level~675; Buhler's weight~1
form has level~800. We verified that there were no newforms of level $N\leq 352$
whose associated projective Galois representation has image isomorphic
to $A_5$. Hence the smallest level of a weight~1 form of $A_5$ type
is in the range $[353,675]$.\footnote{Note added in May 2016: explicit computations by Alan Lauder using a fast machine and the algorithms of this paper have proved that the smallest level of a weight~1 $A_5$ form is in fact~633; see forthcoming work ``A computation of modular forms of weight one and small level'' by Lauder and the author for more details.}  Note that by the remarks on p248 (case (c${}_2$))
of~\cite{serre}, and the computer calculations of~\cite{freyetal}
(see in particular
section~4.1 of~\cite{kiming}), we know that the smallest \emph{prime}
appearing as the conductor of an $A_5$ representation is~2083 (and
hence the range can be shortened to $[354,675]$).

The outline of this paper is as follows. We explain our algorithm in
section~1, summarise the characteristic zero
results in section~2, and the characteristic~$p$ results in section~3.

Finally we would like to thank the both Frank Calegari and
the anonymous referee of this paper, for helpful remarks.

\section{Computing weight one cusp forms.}

We remind the reader of some definitions. If $N\geq5$ is an integer, then
there is a smooth affine curve $Y_1(N)$ over $\Z[1/N]$
parameterising elliptic curves over $\Z[1/N]$-schemes equipped with
a point of exact order~$N$. The fibres of $Y_1(N)\to\Spec(\Z[1/N])$
are geometrically
irreducible. If $E_1(N)$ denotes the universal elliptic
curve over $Y_1(N)$ then the pushforward of $\Omega^1_{E_1(N)/Y_1(N)}$
is a sheaf $\omega$ on $Y_1(N)$. There is a canonical compactification
$X_1(N)$ of $Y_1(N)$, obtained by adding cusps, and this curve is
smooth and proper over $\Z[1/N]$. Furthermore the sheaf $\omega$ extends
in a natural way
to $X_1(N)$. If $R$ is any $\Z[1/N]$-algebra then we denote by $X_1(N)_R$
the pullback of $X_1(N)$ to~$R$. We write $M_k(N;R)$ for
$H^0(X_1(N)_R,\omega^{\otimes k})$ and refer to this space
as the level~$N$ weight~$k$ modular forms defined over~$R$.
We write $S_k(N;R)$ for the sub-$R$-module of this $R$-module consisting
of sections which vanish at every cusp.

If $K$ is a field where $N$ is invertible
then the $K$-dimension of $M_k(N;K)$ is~0 if $k<0$,
it is~1 if $k=0$,
and can be easily computed if $k\geq2$ using the Riemann--Roch formula. If however $k=1$
then the Riemann--Roch theorem unfortunately only tells us the dimension of the
subspace of Eisenstein series in $M_k(N;K)$. Similarly
for $k\not=1$ the dimension of $S_k(N;K)$ is also easily computed,
but for $k=1$, even the dimensions of these spaces seem to
lie deeper in the theory. 

A weight one cusp form of level $N$ is a section of $\omega$ which
vanishes at every cusp, and is hence a section of $\omega\otimes C^{-1}$ where
$C$ is the sheaf associated to the divisor of cusps.
One can compute the degree of $\omega$ on $X_1(N)_{\C}$ without too
much trouble: for example $\omega^{\otimes 12}$ descends to a degree~1
sheaf on the $j$-line $X_0(1)_{\C}$ and hence the degree of $\omega$
on $X_1(N)$ is $[\SL_2(\Z):\Gamma_1(N)]/24$.
Similarly the number of cusps on $X_1(N)_{\C}$ is well-known to be
$\frac{1}{2}\sum_{0<d|N}\phi(d)\phi(N/d)$, the sum being over the positive
divisors of $N$. Hence the degree of $\omega\otimes C^{-1}$ is
easily computed in practice for small~$N$.
Although we did these calculations over $\C$, the degree of $\omega$
is the same in characteristic zero and in characteristic~$p$ (we are
assuming $N\geq5$ so the scheme of cusps over $\Z[1/N]$ is etale).
From these formulae, which are messy but entirely elementary,
it is easy to deduce the following.

\begin{lemma} Let $K$ be a field in which the positive integer
$N$ is invertible.
Then there are no non-zero weight~1 cusp forms of level $N$ over $K$,
if $5\leq N\leq 22$, $24\leq N\leq 28$, $N=30$ or $N=36$.
\end{lemma}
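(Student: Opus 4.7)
The plan is to exploit the fact that $X_1(N)_K$ is a smooth, proper, geometrically irreducible curve over $K$ (as noted in the excerpt for $N\geq 5$), and that a line bundle of negative degree on such a curve has no non-zero global sections. Since a weight one cusp form is a section of $\omega\otimes C^{-1}$, it suffices to show that $\deg(\omega\otimes C^{-1})<0$ for each $N$ in the listed range.

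First I would observe that both $\deg\omega$ and $\deg C$ are independent of the characteristic, as is remarked in the excerpt (the scheme of cusps is étale over $\Z[1/N]$ for $N\geq 5$, and $\omega^{\otimes 12}$ comes from the $j$-line). So it is enough to compute the degree of $\omega\otimes C^{-1}$ over $\C$. The excerpt already supplies the two ingredients:
\[
\deg\omega = \frac{[\SL_2(\Z):\Gamma_1(N)]}{24}, \qquad \deg C = \frac{1}{2}\sum_{0<d\mid N}\phi(d)\phi(N/d),
\]
and $[\SL_2(\Z):\Gamma_1(N)]=N^2\prod_{p\mid N}(1-p^{-2})$ is a standard formula.

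Next I would tabulate $\deg\omega-\deg C$ for each $N$ in the set $\{5,\ldots,22\}\cup\{24,\ldots,28\}\cup\{30,36\}$ and verify that the result is strictly negative in every case. For instance, at $N=11$ one gets $\deg\omega=5$ and $\deg C=10$, giving $-5<0$; at $N=22$ one gets $15-20=-5<0$; and at $N=36$ one gets $36-40=-4<0$. The list is set up precisely so that this inequality holds: at the first excluded value $N=23$ one finds $\deg\omega-\deg C=22-22=0$, which is exactly why $N=23$ must be treated separately (and at $N=29$ the difference is already positive).

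There is no real obstacle here beyond the routine arithmetic of running through the finite list. Once the negativity of $\deg(\omega\otimes C^{-1})$ is confirmed for each listed $N$, the conclusion follows immediately: on the smooth proper geometrically connected curve $X_1(N)_K$, the line bundle $\omega\otimes C^{-1}$ has negative degree, hence $S_1(N;K)=H^0(X_1(N)_K,\omega\otimes C^{-1})=0$. The only point that merits a sentence of justification is why ``geometrically irreducible'' passes to ``connected'' for the purposes of the degree argument, but this is immediate from smoothness.
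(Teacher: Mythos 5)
Your proposal is correct and is exactly the paper's argument: the paper's proof consists of the single observation that $\deg(\omega\otimes C^{-1})<0$ for each listed $N$, which you have fleshed out with the degree formulas and sample computations (your values at $N=11$, $22$, $36$, and the boundary cases $N=23$, $29$ all check out).
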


\begin{proof} Indeed, the degree of $\omega\otimes C^{-1}$ is less than
zero in these cases.
\end{proof}

If $N\leq 4$ then there are theoretical issues with the approach we
have adopted, because $X_1(N)_K$ is only a coarse moduli space,
and there is no natural sheaf $\omega$ on $X_1(N)_K$ (there is a
problem at one of the cusps when $N=4$, and problems at elliptic points when
$N\leq3$). On the
other hand, one can still give a rigorous definition of a
modular form of level $N$ for $N\leq 4$ (using the theory of algebraic
stacks, for example, or the classical definition as functions on the
upper half plane if $K=\C$) and one easily checks, using any of
these definitions, that a cusp form of
level $N$ is also naturally a cusp form of level $Nt$ for any positive
integer $t$. Because 1,2,3 and 4 all divide~12, and there are no
non-zero cusp forms of level~12 by the above lemma, we conclude

\begin{corollary} Let~$K$ be a field where the positive integer~$N$
is invertible.
There are no non-zero cusp forms of level~$N$ over~$K$ for any~$N<23$.
\end{corollary}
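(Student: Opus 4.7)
The plan is to combine the lemma, which handles $5\le N\le 22$, with a separate argument for $N\in\{1,2,3,4\}$. Since those four small values are precisely where $X_1(N)$ fails to be a fine moduli space, the bulk of the work is choosing a definition of weight one cusp form of level $N\le 4$ that is both rigorous and compatible with level-raising maps.

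First I would fix a definition: for $N\le 4$, I would define $S_1(N;K)$ either via the stacky compactification of $\mathcal{Y}_1(N)$ (where $\omega$ really does exist) or, when $K=\mathbf{C}$, as holomorphic functions on the upper half plane satisfying the usual transformation law under $\Gamma_1(N)$ and vanishing at the cusps. The key compatibility I need is that any such form of level $N$ is canonically a form of level $Nt$ for every positive integer~$t$; this is essentially the statement that pullback along the natural map $X_1(Nt)\to X_1(N)$ (or rather its stacky analogue) sends weight one cusp forms injectively to weight one cusp forms. Over $\mathbf{C}$ this is completely routine because $\Gamma_1(Nt)\subset\Gamma_1(N)$ and the cusps of $X_1(Nt)$ map to cusps of $X_1(N)$; in general one argues using the corresponding inclusion of moduli problems.

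Second, I apply this with $t$ chosen so that $Nt=12$ for each $N\in\{1,2,3,4\}$; since $1,2,3,4$ all divide $12$ such a $t$ exists. Thus any weight one cusp form of level $N\le 4$ over $K$ (with $N$ invertible, hence automatic here if $\mathrm{char}(K)\ne 2,3$; in characteristic $2$ or $3$ the relevant small $N$ are not invertible and the statement is vacuous) gives rise to a weight one cusp form of level $12$ over $K$. The lemma shows that $S_1(12;K)=0$, and injectivity of the level-raising map then forces $S_1(N;K)=0$.

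Finally, the range $5\le N\le 22$ is covered directly by the lemma, so concatenating the two ranges gives vanishing for all $N<23$. The only real obstacle is the foundational one in the first step: making sure the ad hoc definition of $S_1(N;K)$ for $N\le 4$ behaves well under level raising so that the vanishing at level $12$ really does propagate down. Once that is granted, the corollary is immediate.
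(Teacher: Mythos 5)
Your argument is the same as the paper's: the Lemma disposes of $5\le N\le 22$, and for $N\le 4$ one gives an ad hoc definition of weight one cusp forms (via stacks, or as functions on the upper half plane when $K=\C$), checks that a cusp form of level $N$ is naturally one of level $Nt$, and embeds everything into level $12$, where the Lemma gives vanishing.

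The one thing you get wrong is the parenthetical claim that in characteristic $2$ or $3$ ``the relevant small $N$ are not invertible and the statement is vacuous.'' That is false: $N=1$ is invertible in every field, $N=3$ is invertible in characteristic $2$, and $N=2,4$ are invertible in characteristic $3$, so the corollary does assert something in those cases --- and precisely there your use of level $12$ breaks down, since $12$ is not invertible in $K$ and the Lemma does not apply to $S_1(12;K)$. The fix is easy: replace $12$ by a multiple $Nt$ of $N$ that is invertible in $K$ and lies in the Lemma's range, e.g.\ $Nt=5$ for $N=1$, $Nt=15$ for $N=3$ in characteristic $2$, and $Nt=10$ or $20$ for $N=2$ or $4$ in characteristic $3$. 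The paper's own one-line proof silently elides the same point, so this is a wrinkle worth recording rather than a defect peculiar to your argument.
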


The same argument shows that the dimension of the space of weight~1
cusp forms of level~23 is at most~1, because the degree of
$\omega\otimes C^{-1}$
on $X_1(23)$ is~0. Moreover, there will be a non-zero cusp form of level~23
if and only if this sheaf is isomorphic to the structure sheaf
on $X_1(23)$. 
Conversely, there is indeed a non-zero level~23 weight~1 cusp form
in characteristic zero: namely the form $\eta(q)\eta(q^{23})$,
where $\eta=q^{1/24}\prod_{n\geq1}(1-q^n)$.
The mod~$p$ reduction of this form
is a mod~$p$ cusp form for any $p\not=23$, and this proves that the
dimension of the level~23 forms is~1 in characteristic zero and
in characteristic~$p\not=23$.

We have now solved the problem of computing weight one level~$N$
cusp forms for $N\leq28$, but of course such tricks
only work for small levels, and for $N\geq29$
we used a computer to continue our investigations. Our strategy was
as follows. Let~$K$ be an algebraically closed field where~$N\geq29$
is invertible. Let $S_k(N;K)$ denote the weight~$k$ cusp forms of level~$N$
defined over~$K$. We wish to compute $S:=S_1(N;K)$.
First let us choose a form $0\not=f\in M_k(N;K)$ for some $k\geq1$
that we can compute the $q$-expansion of to arbitrary
precision (for example $f$ can be a form of weight at least~2,
or a weight~1 Eisenstein series or theta series).
Then $f.S:=\{fh:h\in S\}$ is a subspace of $S_{k+1}(N;K)$,
which is a space that we can compute as $k+1\geq2$.
We compute
a basis of $q$-expansions for $S_{k+1}(N;K)$, and
then divide each $q$-expansion by~$f$, giving us an explicit
finite-dimensional space of $q$-expansions which contains~$S$.
Repeating this for many choices of~$f$ and continually taking
intersections will typically cut this space down, but after a while
its dimension will stabilise. Let~$V$ be the space of $q$-expansions
so obtained; this is now our candidate
for~$S$. We know for sure that it contains~$S$.
In fact, if we could somehow choose forms~$f_1$ and~$f_2$ as above,
which were guaranteed to have no zeros in common on $X_1(N)_K$,
then we would know for sure that our space really was~$S$.
However, we know of no efficient way of testing to see whether
two given forms share a zero on $X_1(N)$, especially in
characteristic~$p$. Note that in~\cite{freyetal}, working
over $\C$, a careful choice of $f_1$ and $f_2$ is indeed made,
to guarantee that they have no common zero;
our approach is more haphazard.

So far, we have a ``candidate space'' of $q$-expansions, which we know
includes~$S$ and this gives us a reasonable upper bound for the dimension
of~$S$. To get a good lower bound, because we were only really interested
in the case of~$N$ at most~350 or so, we wrote a program
which counts dihedral representations (the logic being the folklore
conjecture that ``most weight~1 forms are dihedral'').
More precisely, what our program does is the
following. For a given~$N$ it counts the number of
representation $\GQ\to\GL_2(\C)$ which are continuous, odd, irreducible,
induced from a character of a quadratic extension of~$\Q$, and have
conductor~$N$. This
computation is a finite one because if $M$ is a quadratic extension
of $\Q$ and $\psi:\GM\to\C^\times$ is a continuous 1-dimensional
representation then the conductor of $\Ind(\psi)$ is the absolute
value of $\disc(M)|c(\psi)|$, where $c(\psi)$ is the conductor of $\psi$
(an ideal of the integers of~$M$), and $|c(\psi)|$ is its norm.
Hence there are only finitely many possibilities for~$M$ and, for
each possibility, one can use class field theory to enumerate the
characters $\GM\to\C^\times$ of conductor $I$, for $I$ any ideal
of $\calO_M$. This approach gives a lower bound for the dimension
of the space of newforms in $S$, and if we repeat
the computation for divisors of $N$, then we get a lower bound for
the dimension of $S$.

We have explained how to get both upper and lower bounds for the dimension
of a space~$S$ of characteristic zero weight one cusp forms. If these bounds
coincide, which of course they often do in practice in the range we
considered,  then
we have computed the dimension of~$S$, we have also proved that
the Galois representations associated to all eigenforms of level~$N$
and conductor~$\chi$ are induced from characters of index two subgroups,
and furthermore, because
both methods we have sketched are constructive, we now have two
ways of actually computing the $q$-expansions of a basis for~$S$
to as many terms as we like, within reason -- an automorphic method
and a Galois method.

If however we run the algorithm above, and the lower bound it produces
is still strictly less than our upper bound, then we guess that there
are some non-dihedral
forms at level~$N$ that are not contributing to our lower bound.
What we now need is a way of rigorously proving that these formal
$q$-expansions really do correspond to holomorphic weight~1
forms rather than forms with poles. We do this as follows.
Choose a form~$h$ in our vector space~$V$. We are now suspecting
that~$h$ is holomorphic; what we know is that $h=g/f$ for some
non-zero weight~$k$ form~$f$ and weight~$k+1$ form~$g$.
Let~$D$ denote the divisor of zeros of~$f$. Then $h$ is a meromorphic
section of $\omega$, with divisor of poles bounded by~$D$.
In particular we have a bound on the degree
of the divisor of poles of $h^2$, which is now meromorphic and
weight~2. Now here's the trick. If we can find a \emph{holomorphic}
weight~2 form~$\phi$ of level~$N$ whose $q$-expansion is the same as that of
$h^2$ up to order $q^{M+1}$, where $M$ is a large integer,
then we have \emph{proved} that $h^2$ is holomorphic;
for $h^2-\phi$ is a weight~2 form which is a holomorphic section
of $\omega^{\otimes2}\otimes(2D)\cong\omega^{\otimes (2+2k)}$ and yet
it has a zero of order at least~$M$
at $\infty$, so as long as $M$ is greater than the degree of
$\omega^{\otimes (2+2k)}$
the form $h^2-\phi$ must be identically zero, and in particular~$h$
must be holomorphic. If we can prove that a basis for~$V$ consists
of holomorphic forms, then we have proved $V=S$. If this algorithm
fails then we have really proved $V\not=S$ and we go back to
choosing forms~$f$ as above and dividing out. Eventually in practice
the process terminates, at least for $N\leq 350$ or so on architecture
that is now ten years old. As mentioned before, 
George Schaeffer has taken all of this much further now.

In practice we do not quite do what is suggested above. Firstly,
instead of working with the full space of forms of level~$N$
we fix a Dirichlet character of level~$N$ and work with forms
of level~$N$ and this character. This gives us a huge computational
saving because it cuts down the dimension of all the spaces
we are working with by a factor of (very) approximately~$N$.
It does introduce some thorny issues at primes dividing $\phi(N)$,
where the diamond operators may not be semisimple, but these
can be dealt with by simply gritting one's teeth and ignoring
diamond operators whose order divides~$p$ in this case.
In fact the issues became sufficiently thorny here for $p=2$
that we decided to leave $p=2$ alone and restrict to the case $p>2$.

Secondly, in fact we do not work over a field at all; we do the entire
calculation on the integral level, working over $\Z[\zeta_n]$
for $\zeta_n$ a primitive $n$th root of unity, $n$ chosen
sufficiently large that all the relevant Dirichlet characters showing
up in the computation have order dividing~$n$. In characteristic zero
we only need to compute with one Dirichlet character per Galois conjugacy
class; but a characteristic zero conjugacy class can break into
several conjugacy classes mod~$p$ and so we need to reduce things
not modulo~$p$ but modulo the prime ideals of~$\Z[\zeta_n]$.
If $\chi$ and $\alpha$
are $\Z[\zeta_n]$-valued Dirichlet characters of level~$N$, and we are trying
to compute in level~$N$, weight~1 and character~$\chi$, then we choose
$f\in S_k(N,\alpha;\Z[\zeta_n])$
and let $L$ denote the lattice $S_{k+1}(N,\alpha\chi;\Z[\zeta_n])/f$.
We run through many choices of~$f$ and, instead of intersecting
vector spaces, we intersect lattices. When computing an intersection
of two lattices $L_1$ and $L_2$ arising
in the above way, one computes not just the intersection but
also the size of the torsion subgroup of $(L_1+L_2)/L_1$;
if any prime number divides the order of this torsion subgroup
then the intersection of $L_1$ and $L_2$ is bigger in characteristic~$p$
than in characteristic~zero. The torsion subgroup is a $\Z[\zeta_n]$-module
and we compute the primes above~$p$ in its support; the reduction of $\chi$
modulo these prime ideals are the mod~$p$ characters where there may
be more mod~$p$ forms than characteristic zero forms. 
Note that in practice the order of the torsion subgroup of $(L_1+L_2)/L_1$
can be so big that it is unfactorable, but this does not matter
because we simply collect all the orders of these torsion groups
and continually compute their greatest common divisor. What often
happens in practice is that we manage to prove that the intersection
is no bigger in characteristic~$p$ than in characteristic zero
for all odd $p\nmid N$; then we have proved that all characteristic~$p$
forms lift to characteristic zero.

Occasionally however we may run into a
prime number~$p$ which shows up in these torsion orders to the extent
that we cannot rule out the dimension of the mod~$p$ space being higher;
we can then compute the $q$-expansion of a candidate non-liftable
form and square it and look in weight~2 in characteristic~$p$
as explained above; if we can
find the $q$-expansion of the square in weight~2 to sufficiently high
precision then we have constructed a non-liftable form.

These tricks, put together, always worked in the region in which we
did computations, which was $N\leq352$ in characteristic~0 and
$N\leq82$ in characteristic~$p>2$. We stopped at $N=352$ in characteristic
zero because of memory issues; by then
we had seen $A_4$ and $S_4$ extensions, but we still felt a long
way from finding an $A_5$ example -- there
is no reason why one should not be able to proceed further by using
a more powerful modern machine.
In characteristic~$p$ we were running
into problems of factoring very large integers when computing
the torsion subgroups of the quotients above, so we stopped at $N=82$ because
of a very interesting example that we found there (see section~3).

\section{Characteristic zero results.}

We ran our calculations in characteristic~0 for all $N\leq352$. Of course,
the dimension of $S_1(N,\chi;\C)$ was often zero.

\subsection{Small level.}

In characteristic zero there is a good theory of oldforms and newforms,
and we firstly list the dimensions of all the non-zero spaces
$S_1^{\new}(N,\chi;\C)$ for $N\leq60$. Note that if $\chi_1$
and $\chi_2$ are Galois conjugate characters then the associated
spaces $S_1^{\new}(N,\chi_1;\C)$ and $S_1^{\new}(N,\chi_2;\C)$
are also Galois conjugate in a precise sense, and in particular have
the same dimension, so we only list characters up to
Galois conjugacy. The (lousy) notation we use for characters is as
follows: if the prime factorization of~$N$ is $p^eq^f\ldots$, 
then a Dirichlet character $\chi$ of level~$N$ can be written
as a product of Dirichlet characters $\chi_p$, $\chi_q\ldots$
of levels $p^e$, $q^f$,\ldots.
By $p_a$ we mean a character $\chi_p$ of level $p^e$ and order~$a$,
and by $p_aq_b\ldots$ we mean the product of such characters.
This notation will not always specify the Galois conjugacy
class of a character uniquely (which is why it's lousy), but it does
in the cases below apart from the case $N=56$, where we need to add
that the character $2_2$ is the unique even character of level~8
and order~2 (thus making the product $2_27_2$ odd). 

\smallskip

\begin{tabular}{|c|c|c|}
\hline
$N$& $\chi$& dimension of $S_1(N,\chi;\C)$\\
\hline
23& $23_2$&     1\\
31& $31_2$&     1\\
39& $3_2 13_2$& 1\\
44& $2_1 11_2$& 1\\
47& $47_2$&     2\\
52& $2_2 13_3$& 1\\
55& $5_2 11_2$& 1\\
56& $2_2 7_2$&  1\\
57& $3_2 19_3$& 1\\
59& $59_2$&     1\\
\hline
\end{tabular}

\smallskip

One can now deduce, for example, that the dimension of $S_1(52;\C)$ is
two, because no $N$ strictly dividing~52 appears in the table,
at $N=52$ the character in the table above has a non-trivial
Galois conjugate, and both the character and its conjugate contribute~1
to the dimension. Similar computations give the dimensions of all
weight~1 spaces of level $N\leq 60$. All of these forms are of dihedral type
and hence are easily explained in terms
of ray class groups. For example, the two newforms at level~47 are explained
by the fact that the class group of $L=\Q(\sqrt{-47})$ is cyclic
of order~5, and if $H$ denotes the Hilbert class field
of $L$ then the four non-trivial characters of $\Gal(H/L)$
can all be induced up to give 2-dimensional Galois representations
of $\Gal(H/\Q)$ of conductor~47 (one gets two isomorphism classes
of 2-dimensional representations).
In particular, the two newforms of level~47 are defined over
$\Q(\sqrt{5})$ and are Galois conjugates. As another example,
one checks that if $P$ is a prime above~13 in~$\Q(i)$ then
the corresponding ray class field of conductor~$P$ has degree~3
over $\Q(i)$, and the corresponding order~3 character
of the absolute Galois group of $\Q(i)$ can
be induced up to $\GQ$ giving a 2-dimensional representation of conductor~52
which is readily checked to be irreducible and odd, and is the
representation corresponding to the level~52 form in the table above
(up to Galois conjugacy). 

One can of course also explain all the forms in the table above
using theta series: for example the first
form in the list has level~23
and quadratic character; the corresponding normalised newform $f$ can be
written down explicitly: $2f=\sum_{m,n}q^{m^2+mn+6n^2}-q^{2m^2+mn+3n^2}$.
There is also another well-known formula for $f$, namely
$f=\eta(q)\eta(q^{23})$, where $\eta(q)=q^{1/24}\prod_n(1-q^n)$.
For other examples one can see~\cite{serre}.

\subsection{$A_4$ examples.}

Our algorithm computed lower bounds for spaces of forms by counting
dihedral representations, and hence our methods make it easy to spot when
one has discovered a form which is not of dihedral type. To work out
what is going on with these forms one needs to do both local and
global calculations; the more pedantic
amongst us might at this point like to choose algebraic closures~$\Qbar$
of~$\Q$, and~$\Qlbar$ of~$\Q_\ell$ for all primes~$\ell$, and
also embeddings of~$\Qbar$ into~$\Qlbar$ (for all~$\ell$) and
into $\C$; this makes life slightly easier in terms of notation.

Notation: if $\ell$
is a prime then $D_\ell$ denotes the absolute Galois group of
$\Q_\ell$, and $I_\ell$ is its inertia subgroup. 

The smallest level where there is a non-dihedral form is $N=124=2^2\times31$.
In fact at level~124 there are four non-dihedral newforms
(and no other newforms, although there is a 3-dimensional space of oldforms
coming from level~31). Let~$\chi$ be a level~124 Dirichlet
character with order~2 at~2 and order~3 at 31; then
$S_1(124,\chi;\C)$ has dimension~2, as does $S_1(124,\chi^{-1};\C)$
(note that $\chi^{-1}$ is the complex conjugate of~$\chi$). What are
the Galois representations attached to the corresponding eigenforms?
Well, let $f,g$ denote the two normalised eigenforms in $S_1(124,\chi;\C)$.
\begin{lemma} The projective image of the Galois representations
associated to $f$ and $g$ are isomorphic to $A_4$; furthermore,
in both cases the number field cut out by this projective
representation is the splitting field~$K$ of $x^4 + 7x^2 - 2x + 14$.
\end{lemma}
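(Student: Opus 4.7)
The plan is to identify the projective image of $\rho_f$ (and similarly $\rho_g$) by combining what we already know from the automorphic computation with a Galois-side construction of a candidate representation. From Section~1 we have $\dim S_1^{\new}(124,\chi;\C)=2$, and since no dihedral newform of this level and character appeared in our counting subroutine, neither $f$ nor $g$ is dihedral. Both representations are irreducible, so their projective image must be isomorphic to $A_4$, $S_4$ or $A_5$, and the projective kernel cuts out a finite Galois extension of $\Q$ unramified outside $\{2,31\}$.

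First I would verify that the splitting field $K$ of $x^4+7x^2-2x+14$ is an $A_4$-extension of~$\Q$ ramified only at~$2$ and~$31$. Compute the polynomial discriminant and check it is a perfect square (ruling out $S_4$); check that the cubic resolvent is irreducible over~$\Q$ (ruling out $V_4$, $C_4$, $D_4$); together these identify the Galois group as $A_4$. Factoring the polynomial discriminant should reveal only the primes $2$ and~$31$, so the root field and hence its Galois closure~$K$ is unramified outside $\{2,31\}$.

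Next I would construct an $A_4$-type Galois representation from~$K$ and match it to~$f$. The isomorphism $A_4\cong\PSL_2(\F_3)\hookrightarrow\PGL_2(\C)$ gives a projective representation $\bar\rho\colon\GQ\twoheadrightarrow\Gal(K/\Q)\cong A_4\hookrightarrow\PGL_2(\C)$, which lifts to a linear $\rho\colon\GQ\to\GL_2(\C)$ because the obstruction lies in $H^2(\GQ,\C^\times)$, which vanishes by Tate. After twisting $\rho$ by a suitable Dirichlet character so that $\det\rho=\chi$, the eigenform-to-representation bijection invoked in the introduction attaches to $\rho$ a weight~1 newform in $S_1(124,\chi;\C)$ of projective image $A_4$. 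Since the non-dihedral part of $S_1^{\new}(124,\chi;\C)$ is spanned by $f$ and $g$, this form must be one of them; the other arises by twisting $\rho$ by the order-$3$ character $\Gal(K/\Q)\twoheadrightarrow A_4/V_4\cong\Z/3\Z\hookrightarrow\C^\times$.

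The main obstacle is the conductor calculation. The prime~$2$ is wildly ramified in $K/\Q$ (the $2$-inertia meets the Klein four subgroup of~$A_4$), so I need to analyse the ramification filtration of a decomposition group above~$2$ together with the twist used to fix $\det\rho=\chi$, and verify that the resulting Artin conductor exponents are exactly~$2$ at~$2$ and~$1$ at~$31$. A convenient cross-check, avoiding much of this local analysis, is to compute several Hecke eigenvalues $a_\ell$ of~$f$ directly from its $q$-expansion and compare with $\operatorname{tr}\rho(\Frob_\ell)$, which is determined by the factorization type of $x^4+7x^2-2x+14$ modulo~$\ell$ and the twisting character at~$\ell$; agreement for enough unramified~$\ell$ forces the projective image to be exactly as claimed.
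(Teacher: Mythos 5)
Your overall strategy matches the paper's: realise $K$ as an $A_4$-extension unramified outside $\{2,31\}$, lift the projective representation $\GQ\twoheadrightarrow\Gal(K/\Q)\hookrightarrow\PGL_2(\C)$ to a linear representation of conductor $124$ and determinant $\chi$, and then use modularity plus the dimension count to conclude the resulting forms are $f$ and $g$. But there are two genuine problems. The first is your identification of the second form: if $\eta$ is the order-$3$ character cutting out the cubic subfield of $K$, then $\det(\rho\otimes\eta)=\chi\eta^2\neq\chi$, so $\rho\otimes\eta$ cannot correspond to a second newform in $S_1(124,\chi;\C)$; moreover $\eta$ is ramified at $31$, so on $D_{31}$ the twist turns $1\oplus\sigma$ into $\eta\oplus\sigma\eta$ with both characters ramified, and the conductor exponent at $31$ jumps. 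The correct second representation is $\rho\otimes\chi_4$ with $\chi_4$ the quadratic character of conductor $4$: this preserves the determinant, is non-isomorphic to $\rho$ precisely because $\rho$ is not dihedral, and keeps conductor $4$ at $2$ (the paper's point that $\Q_2^{nr}(\sqrt{3})=\Q_2^{nr}(\sqrt{-1})$).

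The second problem is that the conductor computation you flag as ``the main obstacle'' is in fact the entire content of the proof and is left undone. The paper's tool is Tate's lifting theorem (the reference in Kiming's article): one chooses a lift $\rho_\ell$ of $\rhobar|_{D_\ell}$ at each ramified prime and obtains a global lift agreeing with $\rho_\ell$ on $I_\ell$, so the global conductor is the product of the local conductors. Here the local analysis is easy because the decomposition group at $2$ is cyclic of order $2$ (completion $\Q_2(\sqrt{3})$, lift $1\oplus\tau$ of conductor $4$) and at $31$ is cyclic of order $3$ (lift $1\oplus\sigma$ of conductor $31$); no ramification-filtration analysis beyond this is needed, and crucially the local lifts are chosen \emph{before} the global one so there is no subsequent determinant-fixing twist to disturb the conductor. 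Your fallback of matching $a_\ell$ with $\mathrm{tr}\,\rho(\Frob_\ell)$ for ``enough'' unramified $\ell$ is not rigorous as stated: without a Sturm-type bound (which itself requires already knowing $\rho$ is modular of the claimed level, weight and character) agreement at finitely many primes proves nothing, and it presupposes a specific linear lift whose construction was the issue in the first place.
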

\begin{proof}
We use the following strategy. We first construct two odd Galois
representations to $\GL_2(\C)$ of conductor 124 and determinant $\chi$,
whose projective images both cut out~$K$; such representations
are known to come from weight~1 forms of level~124 and hence
they must be the representations associated to~$f$ and~$g$.
The lemma is hence reduced to the construction of these two Galois
representations, the heart of the matter being controlling the
conductor. The strategy for doing such things was already used
heavily in~\cite{freyetal},
and the key input is Theorem~5 of~\cite{serre}, a theorem
of Tate, which states the following: 
if we have a projective representation
$\rhobar:\GQ\to\PGL_2(\C)$ and for each ramified prime $\ell$ with
associated decomposition group $D_\ell$ we choose
a lifting of $\rhobar|D_\ell$ to $\rho_\ell:D_\ell\to\GL_2(\C)$, then
there is a global lift $\rho:\GQ\to\GL_2(\C)$ of $\rhobar$
such that if $I_\ell$ is the inertia subgroup of $D_\ell$
then $\rho|I_\ell\cong\rho_\ell|I_\ell$. In particular
the conductor of $\rho$ is the product of the conductors
of the $\rho_\ell$. This result reduces the computation of conductors
of global lifts to a local calculation, which we now do.

The splitting field~$K$ has degree~12 over~$\Q$.
Let $\rhobar:\Gal(K/\Q)\to\PGL_2(\C)$
be an injection. We will lift $\rhobar$ to $\rho$ with conductor~124.
One easily checks
using a computer algebra package that~$K$ is unramified outside~2 and~31.
The decomposition group at~2 is cyclic of order~2 and the completion
of~$K$ at a prime above~2 is isomorphic to $\Q_2(\sqrt{3})$.
The decomposition group at~31 is cyclic of order~3 and cuts
out a ramified degree~3 extension $K_{31}$ of $\Q_{31}$.
The projective representation on the decomposition group at~2
lifts to a reducible representation $1\oplus\tau$, with
$\tau$ the order~2 local character which cuts out $\Q_2(\sqrt{3})$.
This local representation has conductor $4$.
Similarly the projective representation at~31 lifts
to the reducible representation $1\oplus\sigma$, with $\sigma$
an order~3 character with kernel corresponding to~$K_{31}$.
This local representation has conductor~31.
Tate's theorem now implies that there is a global lift
$\rho:\GQ\to\GL_2(\C)$ of $\rhobar$ with conductor~124, which
furthermore on $I_2$ looks like $1\oplus\tau$. We know that $\rho$
is not induced from an index~2 subgroup of $\GQ$ (if it were then
the projective image of $\rho$ would be dihedral), and hence
$\rho':=\rho\otimes\chi_4\not\cong\rho$, where $\chi_4$ is the conductor~4
Dirichlet character. A local calculation at~2 shows that $\rho'$
also has conductor~4 (note that $\Q_2^{nr}(\sqrt{3})=\Q_2^{nr}(\sqrt{-1})$).
Furthermore, $\rho$ and $\rho'$ are odd (because $K$ is not totally real)
and hence both modular, so correspond to two distinct newforms
of level~124; these forms must be~$f$ and~$g$.
\end{proof}

Note that the strategy of the above proof easily generalises to
other levels, assuming one can find the relevant number field,
which we could do in every case that we tried simply by looking
through tables of number fields of small degree unramified outside
a given set of primes. 
The next level where we see non-dihedral forms is $N=133=7.19$,
with character~$\chi$ of order~2 at 7 and~3 at 19; again
this determines $\chi$ up to conjugation.
The weight~1 forms and the corresponding representations
were originally discovered by Tate and
some of his students in the 1970s; see the concluding remarks
of~\cite{tate:hilbert} for some more historical information
about the calculations (which were done by hand)\footnote{We thank
Chandan Singh Dalawat for pointing out this reference to us on MathOverflow.}.
Again the dimension of the space of weight~1
forms of level~$N$ and character~$\chi$ is~2, both forms
are of $A_4$ type and the corresponding $A_4$-extension of~$\Q$
is the splitting field of $x^4 + 3x^2 - 7x + 4$. This can be proved
using the same techniques as the preceding lemma; the splitting
field is unramified outside~7 and~19, the decomposition group at~7
is cyclic of order~2 cutting out $\Q_7(\sqrt{7})$
and the decomposition group at~19 is cyclic of order~3; 
the global quadratic character one twists by to get the second
form is the one with conductor~7.

Whilst we did not go through tables of extensions of $\Q$ carefully
and do the analogues of the above calculations to check everything
rigorously, computational results seemed to indicate
that the first few levels for which there
are~$A_4$ newforms are the levels 124, 133, 171, 201, 209, 224\ldots .

\subsection{$S_4$ examples.}

We talked about the non-dihedral forms of levels~124 and~133 in the
previous section. 

The next non-dihedral newform occurs at level~$148=2^2\times37$, and it is
our first form of type~$S_4$. If $\chi$ is a Dirichlet
character of level~148 which is trivial at~2 and has order~4
at~37 (there are two such characters, and they are Galois conjugate)
then the dimension
of $S_1(148,\chi;\C)$ is~1. One checks using similar
techniques that the $S_4$-extension of $\Q$ cut out by
the projective Galois representation must be
the splitting field of~$x^4 - x^3 + 5x^2 - 7x + 12$.
Indeed, the splitting field of this polynomial has Galois group $S_4$,
is unramified outside~2 and~37, the decomposition
group at~2 is isomorphic to $S_3$ with inertia
the order~3 subgroup, and the decomposition and inertia
groups at~37 are both cyclic of order~4. Note that any
inclusion $S_3\to\PGL_2(\C)$ lifts to an inclusion $S_3\to\GL_2(\C)$;
the induced map $\rho_2:D_2\to\GL_2(\C)$ has conductor~4
because it is the sum of two order three tame characters on inertia.

The author confesses that he was initially slightly surprised to see this
latter extension show up again when looking for mod~$p$ phenomena
in the next section.

Again, an analysis of tables of $S_4$ extensions of~$\Q$
should enable one to check that the first
few levels for which there are $S_4$ forms
are the levels~148, 229, 261, 283, 296\ldots.

\subsection{An $A_5$ example.}

Our search for forms of level $N$, $1\leq N\leq 352$, did not
reveal any $A_5$ forms. However following a suggestion of the anonymous
referee, we did discover the following.

\begin{proposition} There is a level~675 characteristic zero
weight~1 form
whose associated Galois representation has projective
image isomorphic to $A_5$.
\end{proposition}

\begin{proof} We present a proof which is in a sense unenlightening.
Let~$L$ be the splitting field of $x^5 - 25x^2 - 75$. Using a computer
algebra package (for example Magma) one explicitly checks
that $L$ is not totally real, that $\Gal(L/\Q)\cong A_5$, and
that~$L$ is unramified outside~3 and~5. Furthermore 3 factors
as $(P_1P_2\ldots P_{10})^6$ with each $P_i$ having degree~1,
the discriminant of~$L$ at~3 is $3^{70}$, and hence
the discriminant of the completion $L_1$ of $L$ at~$P_1$ is $3^7$.
Hence, if $G_i$ denote the lower numbering filtration on the
group $G_0=\Gal(L_1/\Q_3)$ we know $7=\sum_i(|G_i|-1)$,
and $|G_0|=6$, $|G_1|=3$, so $G_n=1$ for all $n\geq2$.
From this we conclude that the conductor of a minimal lift
of the associated representation $A_5\to\PGL_2(\C)$ is $3^3$ at~3.
A similar calculation at~5 shows that the conductor of a minimal
lift is~$5^2$. By Khare--Wintenberger the associated minimal lift Galois
representation $\Gal(L/\Q)\to\GL_2(\C)$ has conductor $3^35^2=675$
and is modular.
\end{proof}

As is probably clear, the proof does not answer the most important
question, namely how one runs across the polynomial $x^5-25x^2-75$.
The answer is that, following a suggestion of the anonymous referee, we
computed mod~5 forms of small weight and character until we got
lucky. At level~27 there is an ordinary weight~5 cuspidal
eigenform for which the associated mod~5 Galois representation seemed
to have image contained in $Z.SL(2,\F_5)$ with $Z$ the centre of $GL(2,\F_5)$.
A search through the online tables~\cite{jrtables}, knowing that there
may be an $A_5$ extension of~$\Q$ unramified outside~3 and~5 for which
the associated weight~1 form may have conductor less than~800,
soon led us to
the polynomial. Note in particular that the discovery of this $A_5$ form
was completely independent of the weight~1 computations described
in the rest of this paper.

\section{Unliftable mod~$p$ weight~1 forms.}

Also perhaps of some interest are the mod~$p$ forms of level~$N$ that do
not lift to characteristic~0 forms of level~$N$. We first note
the following subtlety: there is a mod~3 form of level~52 with
character of order~2 at~2 and trivial at~13, and this mod~3 form
does not lift to a characteristic zero cusp form with order~2 character.
But this is not surprising -- indeed this phenomenon can happen
in weight $k\geq2$ as well, and in weight 2 it first happens at $N=13$;
this was the reason that Serre's initial predictions about
the character of the form giving rise to a modular representation
needed a slight modification. The weight~1 mod~3 form of level~52
in fact lifts to a level~52 form with character of order~6, and also
to an Eisenstein series of level~52 with order~2 character; on the
Galois side what is happening is that the weight~1
level~52 cusp form of dihedral type mentioned in the previous section has
associated Galois representation which is irreducible
and induced from an index~2 subgroup, but the mod~3 reduction of
this representation is reducible. The phenomenon of not being able
to lift characters in an arbitrary manner was deemed
``uninteresting'' and we did not explicitly search for it (it happens
again mod~5 at level~77 and many more times afterwards; note in particular
that it can happen mod~$p$ for $p\geq5$). 

We now restrict our attention to mod~$p$ weight~1 eigenforms of level~$N$
which do not lift to characteristic zero eigenforms of level~$N$. We did
an exhaustive search for such examples with $p>2$. The first
example we found was at level~74, where there is a mod~3 form
with character trivial at~2 and of order~4 at~37. The associated
mod~3 Galois representation was checked to be irreducible and
have solvable image, and this confused the author for a while,
because he was under the impression that the only obstruction to lifting
weight~1 forms was lifting the image of Galois. This notion is indeed
vaguely true, but what is happening here is that the mod~3 form
of level~74 lifts to no form of level~74 but
to the form of type $S_4$ and level $148=2\times 74$ which we described
in the previous section. Why has the conductor gone up?
It is for the following reason: there is a 2-dimensional mod~3
representation of $D_2$ whose image
is the subgroup
$\bigl(\begin{smallmatrix}1&*\\0&*\end{smallmatrix}\bigr)$ of $\GL_2(\F_3)$
and such that the image of inertia has order~3. The conductor of this mod~3
representation is~$2^1$; however all lifts of this representation
to $\GL_2(\C)$ have conductor at least $2^2$. This example
was nice to find, firstly because it gave the author confidence
that his programs were working, but secondly it somehow really
emphasizes just how
miraculous this whole theory is -- these subtleties of conductors
dropping show up on both the automorphic side and the Galois side.

After finding this level~74 example, what we now realised we really
wanted was a mod~$p$ form which
did not lift to any weight~1 form at all. Fortunately we soon
found it -- it was at level $N=82=2\times 41$, and to our surprise
was a mod~199 form (note that 199 is prime) whose associated Galois
representation had rather large image.
We finish this note by explaining what we found here.

Let $\F_{199^2}$ denote the field with $199^2$ elements. Fix a root
$\tau$ of $X^2+127X+1$; changing $\tau$ will just change everything
below by the non-trivial field automorphism of $\F_{199^2}$.
One can check that the multiplicative order of~$\tau$ is~40.

Let $\chi$ be the group homomorphism $(\Z/82\Z)^\times\to\F_{199^2}^\times$
which sends $47\in(\Z/82\Z)^\times$ (note that 47 is a generator of
the cyclic group $(\Z/82\Z)^\times$) to~$\tau$.
Our programs showed that the space of mod~199 weight~1 cusp forms of level~$N$
and character~$\chi$ was 1-dimensional. Let~$f$ denote this weight~1
eigenform. The reader who wants to join in at home will need
to know the first few terms in the $q$-expansion of~$f$:

\begin{align*}
f=&q + (18\tau + 85)q^2 + (183\tau + 55)q^3 + (120\tau + 135)q^4 + (171\tau + 45)q^5\\
& +     (187\tau + 187)q^6 + (140\tau + 128)q^7 + (194\tau + 161)q^8 + (84\tau + 141)q^9\\
&    + (151\tau + 150)q^{10} + (106\tau + 4)q^{11} + (127\tau + 191)q^{12} + (112\tau +    92)q^{13}\\
& + (27\tau + 2)q^{14} + (146\tau + 37)q^{15} + (172\tau + 44)q^{16} + (192\tau     + 4)q^{17}\\
&+ (137\tau + 125)q^{18} + (189\tau + 117)q^{19} + O(q^{20})
\end{align*}

This is enough to determine~$f$ uniquely. For one can formally multiply
this power series by a weight~1 Eisenstein series of level~82
and character $\chi^{-1}$ and the resulting $q$-expansion (which at this
point we know up to $O(q^{20})$) is the $q$-expansion of a
level~82 mod~199 weight~2 form with trivial character which turns out to be
determined uniquely by the first~19 coefficients of its~$q$-expansion.
The $q$-expansion of this unique weight~2 form
can be worked as far as one wants within reason; we computed
it to $O(q^{20000})$ in just a few minutes. Dividing through by
the Eisenstein series again gives us the $q$-expansion of~$f$ to as
much precision as one wants.

Computing the $q$-expansion of~$f$ to high precision gives us, for free,
plenty of facts about the mod~199 Galois representation associated
to~$f$. Note first
that there \emph{is} a mod~199 Galois representation attached to~$f$;
one cannot use the Deligne--Serre theorem here (because $f$ doesn't lift
to a weight~1 form of characteristic zero)
but one can construct the representation by general theory as follows.
One can multiply~$f$ by the mod~199 Hasse invariant~$A$ to get a mod~199
form~$Af$ of weight~199 which is an eigenvector for all Hecke operators
away from~199; the smallest Hecke-stable subspace containing~$Af$
is 2-dimensional and consists of two eigenvectors $f_1$ and $f_2$;
the $T_\ell$-eigenvalues of~$f$, $f_1$ and $f_2$ all coincide
for $\ell\not=199$, and the $T_{199}$-eigenvalues of $f_1$ and $f_2$
are the two roots of $X^2-a_{199}X+\chi(199)$, which are distinct.
Both $f_1$ and $f_2$
lift to characteristic zero weight~199 eigenforms which are ordinary at~199,
and the associated mod~199 Galois representations are isomorphic;
this is the mod~199 Galois representation $\rho_f$ associated to~$f$.

One consequence of the computation of~$f$ is that that its $q$-expansion
has all
coefficients in $\F_{199^2}$. Write $f=\sum_{n\geq1}a_nq^n$, with
$a_n\in\F_{199^2}$. Explicit computation shows that $a_2$ and $a_{41}$
are non-zero. The Brauer group of a finite field is trivial
and hence there is an associated semisimple Galois representation
$\rho_f:\GQ\to\GL_2(\F_{199^2})$. Standard facts about the mod~$p$
Galois representations associated
to modular forms now imply that $\rho_f$ is unramified outside $\{2,41\}$.
Indeed, for $\ell\not=199$ this is standard, and for $\ell=199$
we use the fact that the mod~$199$ representations of $D_{199}$ 
attached to $f_1$
and $f_2$ above are upper triangular with unramified characters on the
diagonal, and furthermore the unramified character showing up
as the subspace in $\rho_{f_1}$ is the character that shows up
as the quotient in $\rho_{f_2}$; hence $\rho_f$ restricted to $D_{199}$
is a direct sum of two unramified characters, $\rho_f(\Frob_{199})$
is semisimple, and the characteristic
polynomial of $\rho_f(\Frob_{199})$ is $X^2-a_{199}X+\chi(199)$,
just as the characteristic polynomial of $\rho_f(\Frob_{\ell})$
is $X^2-a_{\ell}X+\chi(\ell)$ for all other primes $\ell\not\in\{2,41\}$.

Standard mod~$p$ local-global results apply to $f_1$ and $f_2$,
because they have weight bigger than~1 and lift to characteristic zero.
Furthermore standard level-lowering results apply to $\rho_f$, because
it is absolutely irreducible -- indeed, if it were reducible then
its semisimplification would be the sum of two characters of conductor
dividing~82, and hence if $\ell_1$ and $\ell_2$ were primes congruent
mod~82 then one would have $a_{\ell_1}=a_{\ell_2}$. However one checks
from the computations that $a_7\not=a_{89}$. In particular ``classical''
level-lowering results can be applied to $f_1$ and $f_2$, and one
can deduce level-lowering results for~$f$. For example, there are no
weight~1 mod~$199$
forms of level~41 and character~$\chi$, hence $\rho_f$ must be ramified at~2.
It is also ramified at~41 (because its determinant is).

The kernel of $\rho_f$ hence corresponds to a number
field~$L$ ramified only at~2 and~41 such that $\Gal(L/\Q)$ is
isomorphic to the image of $\rho_f$, which is a subgroup of $\GL_2(\F_{199^2})$.
The local representation at~2 is Steinberg, which means that
inertia at~2 has order~199; the local representation at~41 is
principal series corresponding to one unramified and one tamely
ramified (but ramified) character, which implies that inertia at~41
has order~40. In particular~$L$ is tamely ramified at both~2 and~41.

The only natural question left is: what is the image of $\rho_f$,
or equivalently what is $\Gal(L/\Q)$?
For several years we assumed that the image would contain
$\SL_2(\F_{199^2})$, but it was only in~2009 that we actually
tried to sit down and compute it, and we discovered that in
fact the image is smaller.

\begin{proposition} The image~$X$ of $\rho_f$ is, after conjugation
in $\GL_2(\overline{\F}_{199})$ if necessary, contained
in $Z\cdot\GL_2(\F_{199})$, with $Z$ the subgroup
of scalar matrices in $\GL_2(\F_{199^2})$.
Furthermore the quotient of~$X$ by $X\cap Z$ is $\PGL_2(\F_{199})$.
\end{proposition}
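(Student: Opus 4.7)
The plan is to split the statement in two: (1) $X$ is conjugate into $Z\cdot\GL_2(\F_{199})$, equivalently the projective image $\bar X:=X/(X\cap Z)$ is conjugate into $\PGL_2(\F_{199})\subseteq\PGL_2(\F_{199^2})$; and (2) $\bar X=\PGL_2(\F_{199})$.

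For (1), the strategy is to prove that $\rho_f$ is isomorphic to its Frobenius-conjugate twisted by $\chi$, namely $\rho_f\cong\sigma\rho_f\otimes\chi$, where $\sigma$ is the non-trivial element of $\Gal(\F_{199^2}/\F_{199})$. Determinants match: $\det(\sigma\rho_f\otimes\chi)=(\sigma\chi)\cdot\chi^2=\chi^{199}\cdot\chi^2=\chi$, using that $\chi$ has order $40$ and $199\equiv-1\pmod{40}$. By absolute irreducibility of $\rho_f$ and Brauer--Nesbitt, the isomorphism is equivalent to the trace identity $\sigma(a_\ell)=a_\ell\chi(\ell)^{-1}$ at Frobenius elements; a Sturm bound reduces this to a finite check. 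To make such a bound effective in weight~$1$ one multiplies $f$ by an appropriate Eisenstein series (as done earlier in the paper to pin down $f$) and performs the comparison in weight~$2$, where standard Sturm bounds apply. Given the isomorphism, $\bar\rho_f$ is Frobenius-invariant, and descent to $\PGL_2(\F_{199})$ is unobstructed: the triviality of the Brauer group of a finite field gives $H^1(\Gal(\F_{199^2}/\F_{199}),\PGL_2(\F_{199^2}))=0$, so after conjugation $\bar X\subseteq\PGL_2(\F_{199})$, equivalently $X\subseteq Z\cdot\GL_2(\F_{199})$.

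For (2), I apply Dickson's classification of finite subgroups of $\PGL_2(\F_{199})$: each is cyclic, dihedral, one of the exceptional groups $A_4$, $S_4$, $A_5$, or contains $\PSL_2(\F_{199})$. The Steinberg monodromy at $2$ provides an element of order $199$ in $\bar X$, immediately ruling out the exceptional groups (orders at most $60$) and cyclic subgroups (the cyclic group of order $199$ is unipotent and lies in a Borel, incompatible with absolute irreducibility of $\rho_f$). Any dihedral subgroup of $\PGL_2(\F_{199})$ containing an element of order $199$ must normalize the unique $199$-Sylow, hence also lies in a Borel, again contradicting irreducibility. Therefore $\bar X\supseteq\PSL_2(\F_{199})$. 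To upgrade to the full $\PGL_2(\F_{199})$, I use the tame inertia at $41$: it generates a cyclic subgroup of order $40$ in $\bar X$. But the element orders in $\PSL_2(\F_{199})$ divide one of $99$, $100$, or $199$, and $40$ divides none of these. Hence $\bar X\not\subseteq\PSL_2(\F_{199})$, and so $\bar X=\PGL_2(\F_{199})$.

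The main obstacle is the rigorous execution of step (1): the structural argument is clean, but making the Sturm bound effective for mod-$199$ weight-$1$ cusp forms of level $82$ requires the weight-raising trick already deployed in the paper, which reduces the desired trace identity to a finite verification that can be carried out using the $q$-expansion of $f$ computed to appropriate precision. Once (1) is in hand, the remainder is essentially group theory.
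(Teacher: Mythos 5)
Your overall architecture is sound, and your endgame is genuinely different from (and arguably cleaner than) the one in the paper. Once one knows $\Xbar\subseteq\PGL_2(\F_{199})$, your use of the order-$199$ unipotent from the Steinberg inertia at~$2$ (forcing, via the dichotomy ``contained in a Borel or containing $\PSL_2(\F_{199})$'', that $\Xbar\supseteq\PSL_2(\F_{199})$) together with the order-$40$ tame inertia at~$41$ (no element of $\PSL_2(\F_{199})$ has order~$40$, since element orders there divide $99$, $100$ or $199$) replaces the paper's computations with $\Frob_{661}$ and the eigenvalues of $\Frob_3$. Two small repairs are needed: your statement of Dickson's classification for subgroups of $\PGL_2(\F_{199})$ omits the $p$-semi-elementary subgroups of a Borel (e.g.\ $C_{199}\rtimes C_{99}$), though your irreducibility argument disposes of these too; and the absolute irreducibility you invoke must actually be established (the paper does this by exhibiting $a_7\neq a_{89}$ with $7\equiv 89\bmod 82$).

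The genuine gap is in step~(1), and it is exactly where the paper spends a month of machine time. The identity $\sigma(a_\ell)=a_\ell\chi(\ell)^{-1}$ is a comparison between ${}^{\sigma}f$ and the naive twist $\sum_n a_n\chi(n)^{-1}q^n$, and the latter is \emph{not} a form of level~$82$: since $\chi$ has conductor~$41$, the twisted $q$-expansion is a priori only a mod~$199$ weight-$1$ form of level $2\cdot 41^2=3362$ --- and even establishing that much requires the argument via the weight-$199$ eigenforms $f_1,f_2$, their characteristic-zero lifts, and Katz's theorem on forms whose $q$-expansions are power series in $q^{199}$, none of which appears in your sketch. Consequently the ``Sturm bound'' governing your finite check is the one for $X_1(3362)$ (one cannot pass to $X_0(3362)$, as the relevant forms are not diamond eigenforms), where $\deg\omega=353010$; your finite verification is therefore the same $\sim 3.5\times 10^5$-coefficient computation the paper performs, not a routine check at level~$82$. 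Two further points: the twisting character need not be $\chi^{-1}$ --- matching determinants and local conductors at~$2$ only pins it down to $\chi^{-1}$ or $\chi^{-1}\epsilon_{41}=\chi^{19}$ with $\epsilon_{41}=\chi^{20}$ the quadratic character of conductor~$41$, and these differ by a sign at (for instance) $\ell=3$, so you must determine which identity you are verifying before launching the computation (projectively it does not matter, but for a $q$-expansion check it does); and the descent itself needs the triviality of the centralizer of the image, i.e.\ absolute irreducibility again, before $H^1(\Gal(\F_{199^2}/\F_{199}),\PGL_2(\F_{199^2}))=0$ can be applied. With these ingredients supplied, your route does work, but it is a reorganization of the paper's computation rather than a way around it.
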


\begin{corollary} 

(i) There is a number field~$M$, Galois over~$\Q$,
unramified outside~2 and~41, tamely ramified at~2 and~41, and
with $\Gal(M/\Q)=\PGL_2(\F_{199})$.

(ii) The weight~1 form~$f$ does not lift to any weight~1 eigenform
of characteristic zero.
\end{corollary}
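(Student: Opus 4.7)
The plan is to derive both parts of the Corollary as essentially immediate consequences of the Proposition together with the ramification analysis of $\rho_f$ already carried out above. For part~(i), I would take $M$ to be the fixed field of the kernel of the composition
\[
\GQ \xrightarrow{\rho_f} X \twoheadrightarrow X/(X\cap Z) \cong \PGL_2(\F_{199}).
\]
The composite is surjective because $X$ is by definition the image of $\rho_f$, so $\Gal(M/\Q)=\PGL_2(\F_{199})$. The ramification properties of $M$ are inherited from the number field $L$ cut out by $\rho_f$, in which $M$ sits: the preceding discussion established that $L$ is unramified outside $\{2,41\}$, with inertia of order~199 at~2 and of order~40 at~41. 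Both orders are coprime to the respective residue characteristics, so $M/\Q$ is tamely ramified at both primes.

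For part~(ii), I would argue by contradiction. Suppose $f$ is the mod-199 reduction of some weight~1 characteristic zero eigenform $\tilde f$ of level~82. By Deligne--Serre there is a continuous irreducible odd representation $\tilde\rho\colon\GQ\to\GL_2(\C)$ attached to $\tilde f$, and its image is finite. The classical classification of finite subgroups of $\PGL_2(\C)$ then forces the projective image of $\tilde\rho$ to be dihedral, $A_4$, $S_4$, or $A_5$ (cyclic being excluded by irreducibility). Choosing a suitable prime above~199 in the ring of integers of the coefficient field of $\tilde f$ and reducing, one matches the Hecke traces of $\tilde f$ with those of $f$ at all primes $\ell\notin\{2,41,199\}$; Chebotarev together with Brauer--Nesbitt then identify the semisimplification of $\overline{\tilde\rho}$ with $\rho_f$, and because $\rho_f$ has non-solvable projective image (hence is absolutely irreducible) one concludes $\overline{\tilde\rho}\cong\rho_f$.

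The remainder is purely group-theoretic. The projective image of $\overline{\tilde\rho}$ is a quotient of the projective image of $\tilde\rho$, since any scalar in the image of $\tilde\rho$ must reduce to a scalar. Hence $\PGL_2(\F_{199})$ would have to be a quotient of a dihedral group or of $A_4$, $S_4$, or $A_5$. But $\PGL_2(\F_{199})$ has order $199\cdot 198\cdot 200$, far exceeding $|A_5|=60$, and it contains the non-abelian simple subgroup $\PSL_2(\F_{199})$, so it is neither cyclic nor dihedral; this is the required contradiction. The one step demanding genuine care, rather than mere bookkeeping, is the identification $\overline{\tilde\rho}\cong\rho_f$ via Deligne--Serre and the choice of compatible integral models; once this is in hand, both parts of the Corollary follow from the Proposition and standard finite group theory.
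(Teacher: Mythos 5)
Your proposal is correct and follows essentially the same route as the paper: part~(i) takes $M$ to be the field cut out by the projective representation and inherits tameness from the earlier ramification analysis, and part~(ii) reduces to the group-theoretic fact that no finite subgroup of $\GL_2(\C)$ can have $\PGL_2(\F_{199})$ (equivalently, the simple group $\PSL_2(\F_{199})$) as a subquotient. The paper states this last point in one line, whereas you spell out the Deligne--Serre/Brauer--Nesbitt identification and the classification of finite projective images; both are fine.
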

\begin{proof} (of corollary) (i) $M$ is the kernel of $\overline{\rho}_f$.
(ii) There is no finite subgroup of $\GL_2(\C)$ with a subquotient
isomorphic to the simple group $\PSL_2(\F_{199})$ and so $\rho_f$
does not lift to $\GL_2(\C)$; hence neither does~$f$.
\end{proof}
\begin{remark} Our original proof of the proposition involved
computing many of the $q$-expansion coefficients of~$f$; indeed
one crucial intermediate calculation took a month. We are very
grateful to Frank Calegari for suggesting a simpler approach. Our
original approach can be found on the original ArXiv posting
of this article, in case anyone else is wondering what it is.
\end{remark}

\begin{proof} (of Proposition) Let $\overline{\rho}_f$ denote the
projective representation associated to $\rho_f$,
so $\overline{\rho}_f:\GQ\to\PGL_2(\F_{199^2})$.
It suffices to prove that the image of $\overline{\rho}_f$ 
is the subgroup $\PGL_2(\F_{199})$ of $\PGL_2(\F_{199^2})$.
We now adopt a rather brute force approach.
We have defined~$X$ to be the image of $\rho_f$; we now
let $\Xbar$ denote the image of $\overline{\rho}_f$ in $\PGL_2(\F_{199^2})$.
The finite subgroups of $\PGL_2(\overline{\F}_p)$ for $p$ a prime
were classified
by Dickson (see~\cite{dickson}, sections~255 and~260); they
are as follows. They are either conjugate in $\PGL_2(\overline{\F}_p)$
to a subgroup
of the upper-triangular matrices, are dihedral of order prime to~$p$,
are isomorphic to $A_4$, $S_4$ or $A_5$,
or are conjugate to $\PSL_2(k)$ or $\PGL_2(k)$ for some finite
subfield $k\subset\overline{\F}_p$. We will rule out all but
one possibility for~$\Xbar$; we know of no other way of proving
the result.

We start by observing that looking at the $q$-expansion of~$f$
gives upper and lower bounds for the size of~$\Xbar$.
First, the size of~$\Xbar$ is
bounded above by the size of $\PGL_2(\F_{199^2})$, and this means
that $\Xbar$ cannot be conjugate to $\PSL_2(k)$ or $\PGL_2(k)$
for any finite field $k$ of size at least $199^3$. In fact
because $\det(\rho_f)=Im(\chi)=\mu_{40}\subset\F_{199^2}$ consists
of squares in $\F_{199^2}$, the image of $\rho_f$ is contained
within $\mu_{80}\SL_2(\F_{199^2})$ (where here $\mu_{80}$ is the
cyclic group of $80$th roots of unity considered as a subgroup
of the scalars in $\GL_2(\F_{199^2})$), which rules out the case
$\Xbar=\PGL_2(\F_{199^2})$ as well.

On the other hand we can compute the semisimple conjugacy classes
of the first 1500
unramified primes by explicitly computing the $q$-expansion
of~$f$ to high precision; this only takes a few minutes
and shows that $\Xbar$ has
at least~199 elements (the point being that $a_{\ell}^2/\chi(\ell)$
takes on all~199 values of $\F_{199}$ as $\ell$ varies over
the first 1500 unramified primes; we do not need to worry about
whether Frobenius elements actually are semisimple, because if
$g\in X\subseteq\GL_2(\F_{199^2})$ is non-semisimple then its semisimplification
is $g^{199^2}\in X$). This means that $\Xbar$ cannot
be isomorphic to $A_4$, $S_4$ or $A_5$. Next we observe that~$\Xbar$ cannot be
conjugate to a subgroup of the upper-triangular matrices.
For if it were, the semisimple representation $\rho_f$ would be the sum of
two characters each having conductor a divisor of~82 and
in particular one could deduce that if $\ell_1$ and $\ell_2$
were unramified primes which were congruent mod~82 then
$a_{\ell_1}$ and $a_{\ell_2}$ would be equal; however this
is not the case, as $a_7\not=a_{89}$. As a consequence we
deduce that $\rho_f$ is absolutely irreducible.

We are left with the following possibilities: $\Xbar$ can be dihedral
of order prime to~199,
or conjugate to $\PSL_2(k)$ for $k$ of size~199 or~$199^2$, or conjugate
to $\PGL_2(\F_{199})$. We next rule out 
the dihedral case; if $\Xbar$ were dihedral then $\rho_f$ restricted
to an index two subgroup would be the sum of two characters,
and hence $\rho_f$ would be induced from an index~2 subgroup
corresponding to a quadratic extension of~$\Q$ unramified outside~2 and~41.
There are only seven such extensions, namely $\Q(\sqrt{D})$
for $D\in\{2,41,82,-1,-2,-41,-82\}$, and for each one it
is easy to find a prime~$\ell\not\in\{2,41\}$ which is inert in
the extension and such that $a_{\ell}\not=0$ (indeed the smallest
prime $\ell$ such that $a_{\ell}=0$ is $\ell=193$, but there is an inert
prime $p\leq 13$ in each of these quadratic extensions). However all such
Frobenius elements would have trace zero if $X$ were dihedral.

We next rule out the case $\Xbar=\PSL_2(\F_{199^2})$.
Our original method rather convoluted, involving writing $f=\sum_na_nq^n$
and verifying, on a computer, that the coefficients $a_n$ for all $n\leq 353011$
with $n\equiv1$~mod~41 were all in $\F_{199}$. We then argued
that $\sum_{t\geq0}a_{1+41t}q^{1+41t}$, a weight~1 form
of level~$2\times 41^2=3362$, must have all $q$-expansion coefficients
in $\F_{199}$, which gave information about the restriction of $\rho_f$ to 
the absolute Galois group of $\Q(\zeta_{41})$, and this sufficed.
This calculation took several weeks on a computer (and the details
can be found on the first ArXiv version of this paper). We are grateful
to Frank Calegari for providing the following alternative argument.
The trick is to observe that our computer calculations have
shown that the space of mod~199 level~82 weight~1
forms of character $\chi$ is 1-dimensional, and is spanned
by~$f$. Let us now construct another element of this space, as follows.
In simple terms it could be described as ``the Galois conjugate
of the newform attached to $f\otimes\chi^{-1}$'', but because the
theory of newforms is thorny in characteristic~$p$ let us spell
out how to construct this form explicitly. 
First take~$f_1$, the mod~199 weight~199 eigenform from before with the same
Galois representation as~$f$. Lift it to $F_1$, a characteristic zero eigenform
of weight~199 and level~82, and with character~$\tilde\chi$ of conductor~41.
The only possibility
for the local component of the automorphic representation attached to~$F_1$
at~41 is a principal series representation attached to one unramified
and one ramified character. This means that the newform~$G_1$ attached
to $F_1\otimes\tilde\chi^{-1}$ also has level~82. Similarly if we lift
$F_2$ to a form of character~$\tilde\chi$, the newform~$G_2$
attached to $F_2\otimes\tilde\chi^{-1}$ has level~82. Furthermore
the mod~199 reductions of $G_1$ and $G_2$ have level~82, character
$\chi^{-1}=\chi^{199}$ and $q$-expansions
which are equal apart from the coefficients of $q^n$ with $199\mid n$.
This means that their difference is the 199th power of a weight~1
form of level~82 and character~$\chi$. and this form must hence be
a multiple of~$f$.

Unravelling this we see that we have proved that if $f=\sum_na_nq^n$
then for all primes $\ell\not=2,41,199$ we have
$\overline{a}_\ell=a_\ell/\chi(\ell)$,
where $\overline{a}_\ell=a_\ell^{199}$ is the Galois conjugate of~$a_\ell$.
In particular if $x\in\Xbar$ and we lift $x$ to
$y\in\SL_2(\overline{\F}_{199})$ with eigenvalues $\alpha$ and $1/\alpha$
then $(\alpha+1/\alpha)^2=a_\ell^2/\chi(\ell)\in\F_{199}$. However it
is easy to check that most elements of $\PSL_2(\F_{199^2})$ do
not have this property. We conclude that $\Xbar\not=\PSL_2(\F_{199^2})$.

We finally have to distinguish between the two remaining possibilities
for $\Xbar$, namely $\Xbar=\PSL_2(\F_{199})$ and and $\Xbar=\PGL_2(\F_{199})$.
Because $\Xbar$ can be no larger than $\PGL_2(\F_{199})$ we do know
that (after conjugation if necessary) $X\subseteq Z.\GL_2(\F_{199})$,
with $Z$ the scalars in $\GL_2(\F_{199^2})$.
Furthermore $\det(X)=\mu_{40}\subset\F_{199^2}^\times$ and hence
$X\cap Z\subseteq\mu_{80}$. One checks that if $\ell$ is the
prime~661 then $\rho_f(\Frob_{\ell})$ has semisimplification a scalar
matrix with order~40 and hence $\mu_{40}\subseteq X\cap Z$. Furthermore
the normal index~40 subgroup~$Y:=\rho_f(\Gal(\Qbar/\Q(\zeta_{41})))$ of~$X$
is contained within $\SL_2(\F_{199})$
and hence $\overline{Y}=Y\cap Z$ is a normal subgroup of $\overline{X}$
of index at most~40 and hence a normal subgroup of $\PSL_2(\F_{199})$
of index at most~40. But $\PSL_2(\F_{199})$ is simple and hence
$\overline{Y}=\PSL_2(\F_{199})$. This means that $Y$ is either
$\SL_2(\F_{199})$ or an index~2 subgroup -- but $\SL_2(\F_{199})$
is a perfect group and hence $Y=\SL_2(\F_{199})$, and so
$\mu_{40}\SL_2(\F_{199})\subseteq X$. Because we know $Y$ has index~40
in~$X$ we deduce that $\mu_{40}\SL_2(\F_{199})$ is an index~2 subgroup
of~$X$. If $\Xbar=\PSL_2(\F_{199})$ then this forces $X=\mu_{80}\SL_2(\F_{199})$,
but this cannot be the case because the eigenvalues $\alpha$
and $\beta$ of $\rho_f(\Frob_3)$ have the following property:
if $\delta\in\mu_{80}\subset\F_{199^2}$ satisfies $\delta^2=\alpha\beta$
then $(\alpha+\beta)/\delta\not\in\F_{199}$. Hence $\Xbar=\PGL_2(\F_{199})$.
\end{proof}
We finish by remarking that the corresponding $\PGL_2(\F_{199})$-extension
of $\Q$ contains a quadratic field~$J$, corresponding to the subgroup
$\PSL_2(\F_{199})$. It is easy to establish
what this subextension is, as it is unramified outside~2 and~41
and in fact also unramified at~2, because $L/\Q$ is tamely
ramified at~2, and hence it must be $\Q(\sqrt{41})$.
In particular we deduce the existence of a Galois extension
of $\Q(\sqrt{41})$, unramified outside~2 and~41,
with Galois group $\PSL_2(\F_{199})$.

\bibliographystyle{amsalpha}
\newcommand{\etalchar}[1]{$^{#1}$}
\providecommand{\bysame}{\leavevmode\hbox to3em{\hrulefill}\thinspace}
\providecommand{\MR}{\relax\ifhmode\unskip\space\fi MR }
% \MRhref is called by the amsart/book/proc definition of \MR.
\providecommand{\MRhref}[2]{%
  \href{http://www.ams.org/mathscinet-getitem?mr=#1}{#2}
}
\providecommand{\href}[2]{#2}

\end{document}